\providecommand{\abs}[1]{\lvert#1\rvert}
\newcommand{\ov}{\overline}
\newtheorem{thm}{Theorem}
\newtheorem{claim}{Claim}
\newcommand{\dist}{\mbox{\em dist\/}}
\newcounter{subclaim}
\newenvironment{subclaim}{\stepcounter{subclaim}\vspace{-12pt}\begin{itemize}\item[(\roman{subclaim})]\def\@currentlabel{(\roman{subclaim})}}
{\end{itemize}\vspace{-12pt}
}
\let\@oldproof=\proof\def\proof{\setcounter{subclaim}{0}\@oldproof} 
\begin{document}

\begin{center}
{\bf\Large A De Bruijn--Erd\H os theorem\\ 
\rule{0pt}{16pt}for $1$-$2$ metric spaces}\\
\rule{0pt}{16pt} Va\v sek Chv\' atal (Concordia University, Montreal)\footnote{{\tt
     chvatal@cse.concordia.ca}\\
\phantom{xxxi}Canada Research Chair in Combinatorial
Optimization}\\
\end{center}

\begin{center}
{\bf Abstract}
\end{center}
\vspace{-0.3cm}
{\small A special case of a combinatorial theorem of De
  Bruijn and Erd\H os asserts that every noncollinear set of $n$
  points in the plane determines at least $n$ distinct lines. Chen and
  Chv\'{a}tal suggested a possible generalization of this assertion in
  metric spaces with appropriately defined lines. We prove this
  generalization in all metric spaces where each nonzero distance
  equals $1$ or $2$.}
\vspace{0.3cm}

It is well known that 
\begin{subclaim} \label{claim.dbe}
{\em
every noncollinear set of $n$ points in the plane
determines at least $n$ distinct lines.}
\end{subclaim}
As noted by Erd\H{o}s~\cite{E43}, theorem~(i) is a corollary of the
Sylvester--Gallai theorem (asserting that, for every noncollinear set
$S$ of finitely many points in the plane, some line goes through
precisely two points of $S$); it is also a special case of a
combinatorial theorem proved later by De Bruijn and Erd\H os
\cite{DE48}.

Chen and Chv\'{a}tal \cite{CC08} suggested that theorem
\ref{claim.dbe} might generalize in the framework of metric spaces.
In a Euclidean space, line $\ov{uv}$ is characterized as 
\begin{multline*}
\ov{uv}=\{p:
\dist(p,u)\!+\!\dist(u,v)\!=\!\dist(p,v)\;\mbox{{\rm or}}\;\\
\dist(u,p)\!+\!\dist(p,v)\!=\!\dist(u,v)\;\mbox{{\rm or}}\;  
\dist(u,v)\!+\!\dist(v,p)\!=\!\dist(u,p)\}, 
\end{multline*}
where $\dist$ is the Euclidean metric; in an arbitrary metric space
$(S,\dist)$, the same relation may be taken for the definition of the
line. With this definition of lines in metric spaces, Chen and
Chv\'{a}tal asked:
\begin{subclaim} \label{ccc}
{\em True or false? Every metric space on $n$ points, where $n\ge 2$,
  either has at least $n$ distinct lines or else has a line that
  consists of all $n$ points.}
\end{subclaim}
Let us say that a metric space on $n$ points has the {\em De Bruijn -
  Erd\H{o}s property\/} if it either has at least $n$ distinct lines
or else has a line that consists of all $n$ points: now we may state
\ref{ccc} by asking whether or not all metric spaces on at least $2$
points have the De Bruijn - Erd\H{o}s property. A survey of results
related to this question appears in~\cite{BBC}.

 By a {\em $1$-$2$ metric space,\/} we mean a metric space where each
 nonzero distance is $1$ or $2$. Chiniforooshan and Chv\'
 atal~\cite{CC11}  proved that
\begin{subclaim} \label{cc12}
{\em every $1$-$2$ metric space on $n$ points has $\Omega(n^{4/3})$
  distinct lines and this bound is tight.}
\end{subclaim}
This result states that all sufficiently large $1$-$2$ metric spaces
have a property far stronger than the De Bruijn - Erd\H{o}s property,
but it does not imply that all $1$-$2$ metric spaces on at least $2$
points have the De Bruijn - Erd\H{o}s property.  The purpose of the
present note is to remove this blemish.

\begin{thm}\label{only}
All $1$-$2$ metric spaces on at least $2$ points have the De Bruijn -
Erd\H{o}s property.
\end{thm}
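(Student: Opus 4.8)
The plan is to recast the problem in graph-theoretic language, make two easy reductions, and then run an induction on $n$ whose only hard core is a dense, low-diameter regime.

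First I would observe that a $1$-$2$ metric space on a set $S$ of $n$ points is the same thing as a graph $G$ on $S$ (join $u,v$ by an edge when $\dist(u,v)=1$; the triangle inequality is automatic, since among three distinct points every distance is at most $2$ while every sum of two distances is at least $2$). Writing $N(w)$ for the neighbourhood of $w$ and $\triangle$ for symmetric difference, checking the three betweenness alternatives gives, for distinct $u,v$,
\[
\ov{uv}=\begin{cases}
N(u)\,\triangle\,N(v), & \dist(u,v)=1,\\
\{u,v\}\cup\bigl(N(u)\cap N(v)\bigr), & \dist(u,v)=2.
\end{cases}
\]
Thus $\ov{uv}=S$ exactly when either $u,v$ are adjacent and every other point is adjacent to precisely one of them, or $u,v$ are non-adjacent and every other point is adjacent to both; my goal becomes to show that, in the absence of such a universal line, $G$ has at least $n$ distinct lines.

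Two reductions come first. If $G$ is disconnected and has no isolated point, some component has size $a$ with $2\le a\le n-2$; each point inside it and each point outside lie at distance $2$ with no common neighbour, so their line is just the two-element set, and these $a(n-a)\ge n$ distinct two-element sets are already $n$ lines (for $n\ge 4$). The case of an isolated point, and the cases $n\le 3$, I treat directly, so I may assume $G$ connected. The engine for the induction is the clean behaviour of lines under deletion: for all $u,v\ne x$ the line spanned by $u,v$ in $G-x$ is exactly $\ov{uv}\setminus\{x\}$. Hence the lines of $G$ coming from pairs that avoid $x$ restrict onto all the lines of $G-x$, so the number of lines never increases under deletion.

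Now the induction on $n$, the base $n=2$ being immediate. Assume $G$ has no universal line and delete a point $x$. If $G-x$ has at least $n-1$ lines it suffices to produce one extra line of $G$ through $x$; the cleanest witness is a two-point line $\ov{xu}=\{x,u\}$, which occurs exactly when $x$ has a non-neighbour $u$ with which it shares no neighbour, and which no pair avoiding $x$ can produce (such a line already holds two points other than $x$). Choosing $x$ to be an endpoint of a longest shortest path, this witness is available whenever $G$ has diameter at least $3$, and the step goes through. The remaining branch is that $G-x$ already carries a universal line; lifting it, either $G$ is universal too or $G$ has a line of size $n-1$ omitting only $x$.

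The hard part will be exactly what the two-point witness cannot reach: graphs of diameter at most $2$, and graphs with a line of size $n-1$ (a near-universal pair). In both regimes lines proliferate only weakly, because many points share a neighbourhood, so the count has to be extracted with care --- I expect through a tally of the distinct neighbourhoods together with the lines each twin class forces. What I anticipate proving is a clean dichotomy: if this tally stalls below $n$ then $G$ is pinned to a short list of symmetric graphs, each of which either is complete or edgeless (and so already has $\binom{n}{2}\ge n$ two-point lines) or else carries a universal line --- a star through any centre--leaf edge, a complete bipartite graph through any cross edge, or a complete multipartite graph with a size-$2$ part $\{a,b\}$ through the non-edge $\ov{ab}$, whose endpoints are adjacent to every other point. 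Establishing this dichotomy, i.e. that diameter-$2$ and near-universal configurations without a universal line must nonetheless manufacture $n$ distinct lines, is the crux on which everything rests.
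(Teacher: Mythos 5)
Your reformulation is sound: the graph-theoretic dictionary for lines ($N(u)\,\triangle\,N(v)$ for an edge, $\{u,v\}\cup(N(u)\cap N(v))$ for a non-edge) is correct, the disconnected case and the deletion lemma are fine, and the inductive step does go through when $G$ has graph-diameter at least $3$, since a two-point line through the deleted vertex cannot be produced by any pair avoiding it. But the proof stops exactly where the theorem begins. You yourself identify the diameter-at-most-$2$ regime and the near-universal regime as ``the crux on which everything rests,'' and for that crux you offer only an anticipated dichotomy (``if the tally stalls below $n$ then $G$ is pinned to a short list of symmetric graphs'') with no argument, no precise definition of the tally, and no verification that the list is complete. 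That dichotomy is not an easy cleanup step: it is essentially equivalent to the theorem itself, since almost all of the delicate configurations live at graph-diameter $2$. Note also that in branch (i) of your induction (where $G-x$ already has $n-1$ lines) you still need one extra line of $G$ when the diameter is at most $2$, and you have no witness for it; so both branches of the induction are open in the hard regime.

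For comparison, the paper's mechanism for counting lines in exactly this regime is a rigidity statement about when two pairs span the same line (Claim~\ref{c0}): once \emph{twins} are excluded from a minimal counterexample (Claims~\ref{c1} and~\ref{c2}), each class of pairs spanning a common line is either a matching with constant label or a subset of an alternating $4$-cycle (Claim~\ref{c3}), which caps the class size at $\max\{(n-1)/2,4\}$ and forces a minimal counterexample to have at most $7$ points; the remainder is finite case analysis. Your proposal contains no substitute for this rigidity step --- nothing that bounds how many pairs can define a single line --- and without such a bound the ``tally of distinct neighbourhoods'' has no way to get off the ground. To complete your route you would need either to prove your structural dichotomy for diameter-$2$ graphs (a substantial classification in its own right) or to import something like the paper's twin/equivalence-class machinery, at which point the argument would no longer be yours. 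As it stands, the proposal is a correct reduction of the problem, not a proof of it.
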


The rest of this note is devoted to a proof of Theorem~\ref{only}.  A
key notion in the proof, one borrowed from \cite{CC11}, is the notion
of {\em twins\/} in a $1$-$2$ metric space: these are points $u,v$
such that $\dist(u,v)=2$ and $\dist(u,w)=\dist(v,w)$ for all points
$w$ distinct from both $u$ and $v$. Use of this notion in counting
lines is pointed out in the following claim (also borrowed from
\cite{CC11}), whose proof is straightforward.

\begin{claim}\label{c0}
If $u_1,u_2,u_3,u_4$ are four distinct points in a $1$-$2$ metric space, then\\
\mbox{\hspace{0.5cm}}$\bullet$ if $\dist(u_1,u_2)\ne \dist(u_3,u_4)$, 
then $\ov {u_1u_2}\ne \ov {u_3u_4}$,\\
\mbox{\hspace{0.5cm}}$\bullet$ if $\dist(u_1,u_2)=\dist(u_2,u_3)=2$, 
then $\ov {u_1u_2}\ne \ov {u_2u_3}$,\\
\mbox{\hspace{0.5cm}}$\bullet$ if $\dist(u_1,u_2)=\dist(u_2,u_3)=1$
and $u_1,u_3$ are not twins,\\ 
\mbox{\hspace{0.5cm}}$\phantom{\bullet}$
then $\ov {u_1u_2}\ne \ov {u_2u_3}$.
\end{claim}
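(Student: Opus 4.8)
The plan is to begin by making explicit what a line $\ov{uv}$ looks like in a $1$-$2$ metric space, since all three claims follow easily once this is recorded. Because every distance lies in $\{1,2\}$, the three betweenness alternatives defining $\ov{uv}$ simplify sharply according to $\dist(u,v)$. When $\dist(u,v)=2$, the outer two alternatives would force a distance of at least $3$ and are impossible, while the middle one forces $\dist(u,p)=\dist(p,v)=1$; hence a point $p\notin\{u,v\}$ lies on $\ov{uv}$ exactly when it is at distance $1$ from both $u$ and $v$. When $\dist(u,v)=1$, the middle alternative is impossible and the outer two survive only as $\dist(p,u)=1,\dist(p,v)=2$ or $\dist(p,v)=1,\dist(p,u)=2$; hence $p\notin\{u,v\}$ lies on $\ov{uv}$ exactly when it is at distance $1$ from precisely one of $u,v$. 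I would state these two descriptions as the single working tool for the rest of the argument.

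For the first two bullets I would argue directly. For the first, suppose $\dist(u_1,u_2)\ne\dist(u_3,u_4)$, say $\dist(u_1,u_2)=1$ and $\dist(u_3,u_4)=2$, and suppose toward a contradiction that $\ov{u_1u_2}=\ov{u_3u_4}$. Since $u_1,u_2\in\ov{u_3u_4}$ and $\dist(u_3,u_4)=2$, the distance-$2$ description forces each of $u_1,u_2$ to be at distance $1$ from both $u_3$ and $u_4$; in particular $\dist(u_1,u_3)=\dist(u_2,u_3)=1$. But $u_3\in\ov{u_1u_2}$ with $\dist(u_1,u_2)=1$, so the distance-$1$ description requires $u_3$ to be at distance $1$ from exactly one of $u_1,u_2$ --- a contradiction. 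The second bullet is even shorter: if $\dist(u_1,u_2)=\dist(u_2,u_3)=2$, then $u_1\in\ov{u_1u_2}$, whereas $u_1\notin\ov{u_2u_3}$ because $\dist(u_2,u_1)=2$ violates the requirement $\dist(u_2,u_1)=1$ of the distance-$2$ description; hence the two lines differ.

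The third bullet is the only place where the twin hypothesis enters, so I expect it to be the crux. Assuming $\dist(u_1,u_2)=\dist(u_2,u_3)=1$ and $\ov{u_1u_2}=\ov{u_2u_3}$, I would first deduce $\dist(u_1,u_3)=2$: since $u_1\in\ov{u_2u_3}$ with $\dist(u_2,u_3)=1$, the point $u_1$ must be at distance $1$ from exactly one of $u_2,u_3$, and as $\dist(u_1,u_2)=1$ this forces $\dist(u_1,u_3)=2$. It then remains to show $\dist(u_1,w)=\dist(u_3,w)$ for every $w\notin\{u_1,u_3\}$; the case $w=u_2$ holds by hypothesis, so take $w\notin\{u_1,u_2,u_3\}$ and set $a=\dist(w,u_1)$, $b=\dist(w,u_2)$, $c=\dist(w,u_3)$. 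By the distance-$1$ description, $w\in\ov{u_1u_2}$ iff $a\ne b$ and $w\in\ov{u_2u_3}$ iff $b\ne c$, and equality of the two lines makes these conditions equivalent for this $w$; a short check of the cases $a=b$ and $a\ne b$ then yields $a=c$. Thus $u_1,u_3$ satisfy both twin conditions, so if $u_1,u_3$ are not twins the lines must differ. The only mild subtlety worth flagging is remembering to handle $w=u_2$ as part of the twin condition rather than only the points outside $\{u_1,u_2,u_3\}$.
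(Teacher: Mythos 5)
Your proof is correct, and since the paper omits the argument as ``straightforward,'' your explicit description of $\ov{uv}$ according to whether $\dist(u,v)$ is $1$ or $2$, followed by the three case checks, is exactly the intended verification. No gaps: the $\{1,2\}$-valued distances do make $a\ne b$ and $b\ne c$ force $a=c$ in the third bullet, and you correctly include $w=u_2$ in the twin check.
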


By a {\em critical $1$-$2$ metric space,\/} we shall mean a smallest
counterexample to Theorem~\ref{only}; in a sequence of claims, we
shall gradually prove the nonexistence of a critical $1$-$2$ metric
space.  We shall say that a line in a metric space is {\em
  universal\/} if, and only if, it consists of all points of the
space.

\begin{claim}\label{c1}
For every pair $u,v$ of twins in a critical $1$-$2$ metric space,
there is a third point $w$ in this space such that
$\dist(u,w)=\dist(v,w)=2$ and $\dist(x,y)=1$ whenever $x\in\{u,v,w\}$,
$y\not\in\{u,v,w\}$.
\end{claim}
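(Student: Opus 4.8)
The plan is to argue by contradiction using minimality (criticality). I am given twins $u,v$ in a critical $1$-$2$ metric space $(S,\dist)$, meaning $\dist(u,v)=2$ and $u,v$ are indistinguishable from all other points. The claim asserts the existence of a "third twin-like" point $w$ at distance $2$ from both $u$ and $v$, with the strong property that $u,v,w$ are all at distance $1$ from everything outside $\{u,v,w\}$. The natural strategy is to delete one twin, say $v$, obtain a smaller $1$-$2$ metric space $S'=S\setminus\{v\}$, invoke minimality to conclude $S'$ has the De Bruijn--Erd\H os property, and then transfer line-counting information back to $S$ to derive the needed structural consequence.

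First I would verify that $S'$ is genuinely a $1$-$2$ metric space on $n-1\ge 2$ points (I should handle the degenerate small cases separately, but these are easy). By criticality, $S'$ satisfies the De Bruijn--Erd\H os property, so either $S'$ has a universal line or at least $n-1$ distinct lines. The key leverage comes from comparing lines in $S$ with lines in $S'$: because $u,v$ are twins, the point $v$ behaves exactly like $u$ with respect to every $w\notin\{u,v\}$, so adding $v$ back cannot destroy lines and in fact the twin relation lets us manufacture extra lines through $v$. I would use Claim~\ref{c0} as the main bookkeeping tool: since $\dist(u,v)=2$, any two pairs realizing distinct distances, or two pairs sharing a vertex at mutual distance $2$, give distinct lines. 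The aim is to show that if $w$ with the stated property did \emph{not} exist, then $S$ would already have at least $n$ distinct lines (or a universal line), contradicting that $S$ is a counterexample.

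Concretely, I would examine the neighborhood structure forced by the absence of such a $w$. Suppose for contradiction that for every point $z\in S\setminus\{u,v\}$ at distance $2$ from both $u$ and $v$, some point $y\notin\{u,v,z\}$ has $\dist(z,y)=2$ (so $z$ fails the third-point condition), and also consider the case where no point is at distance $2$ from both $u$ and $v$ at all. In each scenario I would count lines of the form $\ov{uz}$, $\ov{vz}$, and $\ov{uv}$, using twinness to show many of them are forced to be distinct, and then reconcile this count with the $n-1$ lines inherited from $S'$ (lifted via the embedding $S'\hookrightarrow S$). The recurring mechanism is that the extra vertex $v$, being a twin of $u$ but at distance $2$ from it, always contributes a genuinely new line compared to $S'$, pushing the total from $\ge n-1$ up to $\ge n$ unless the very special configuration described by $w$ occurs and collapses some of these lines into one another or into a universal line.

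The main obstacle, I expect, will be the careful line-counting argument that certifies the jump from $n-1$ to $n$: I must ensure that the lines I exhibit in $S$ as "new" (those involving $v$) are not accidentally equal to lines already counted in $S'$, and that distinct pairs in $S'$ do not merge when $v$ is reintroduced. Handling this requires a delicate case analysis on the distance pattern of each point $z$ to the triple $\{u,v,w\}$, precisely the pattern the claim is trying to pin down, so the combinatorics is somewhat self-referential. I anticipate that Claim~\ref{c0}'s three bullets will do most of the heavy lifting for separating lines by the distance of their defining pair, but the genuinely tricky sub-case will be two pairs both at distance $1$, where I must rule out that the relevant endpoints are themselves twins; managing this twin-within-twin possibility, together with the universal-line escape route, is where the real work lies.
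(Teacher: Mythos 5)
Your opening moves match the paper's: delete one twin, invoke minimality to get the De Bruijn--Erd\H{o}s property for $S\setminus\{v\}$, and compare line counts between $S$ and $S\setminus\{v\}$ using the fact that twins make the two spaces nearly indistinguishable. But the proposal stops exactly where the proof has to start producing the point $w$, and the endgame you sketch points in the wrong direction: you frame the argument as ``assume no such $w$ exists and count at least $n$ lines in $S$,'' with the universal-line alternative for $S\setminus\{v\}$ relegated to an ``escape route.'' In the actual argument the universal line is not an escape route; it is the sole source of $w$. The chain is: (1) every line of $S\setminus\{v\}$ extends injectively to a line of $S$; (2) since $S$ has no universal line, $\overline{uv}\ne S$, and because $u,v$ are twins with $\dist(u,v)=2$ this yields a point $w$ with $\dist(w,u)=\dist(w,v)=2$, whence $\overline{wv}$ is a line of $S$ containing $v$ but not $u$ and hence not the extension of any line of $S\setminus\{v\}$ --- so $S$ has strictly more lines than $S\setminus\{v\}$; (3) since $S$ has fewer than $n$ lines, $S\setminus\{v\}$ has fewer than $n-1$, so by its De Bruijn--Erd\H{o}s property it has a universal line; (4) that universal line must be $\overline{wu}$ for some $w$ with $\dist(w,u)=2$, because a defining pair avoiding $u$, or a pair through $u$ at distance $1$, would extend to a universal line of $S$; (5) universality of $\overline{wu}$ in $S\setminus\{v\}$ together with $\dist(w,u)=2$ forces $\dist(p,u)=\dist(p,w)=1$ for all $p\notin\{u,v,w\}$, and twinness transfers this to $v$ and gives $\dist(w,v)=2$. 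Steps (2)--(5) are absent from your proposal, and nothing in your plan explains where a $w$ satisfying the full distance pattern of the claim would come from.

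A smaller but telling point: you lean on Claim~\ref{c0} as ``the main bookkeeping tool'' and worry about the twins-within-twins subtlety of its third bullet, but the proof of this particular claim does not use Claim~\ref{c0} at all. What it needs instead are three elementary observations specific to the twin pair $u,v$: a line $\overline{py}$ with $p,y\notin\{u,v\}$ contains both of $u,v$ or neither; a line $\overline{yu}$ with $\dist(y,u)=1$ contains both; a line $\overline{yu}$ with $\dist(y,u)=2$ contains $u$ but not $v$. These are what make the injection in step (1) work, what force the universal line of $S\setminus\{v\}$ into the shape described in step (4), and what let you carry the conclusion over to $v$ in step (5). Without them, and without the realization that the universal line of the smaller space is the object you are looking for, the proposal is a correct opening with a genuine gap where the proof should be, rather than a compressed version of a complete argument.
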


\begin{proof}
Let $S$ denote the space we are dealing with. Since $S$ is critical,
$S$ does not have the De Bruijn - Erd\H{o}s property and $S\setminus
u$ has the De Bruijn - Erd\H{o}s property. We will derive the
existence of $w$ from these two facts. 

The assumption that $u,v$ are twins implies that\\
\mbox{\hspace{0.5cm}}($a$) if $x,y$ are distinct points in $S\setminus\{u,v\}$,
then the line $\overline{xy}$ in $S$ contains either both $u,v$ or
neither of $u,v$;\\
\mbox{\hspace{0.5cm}}($b$) if $w\in S\setminus u$ and $\dist(w,v)=1$, then the line
$\overline{wv}$ in $S$ (and the line $\overline{wu}$ in $S$) contains both $u,v$;\\
\mbox{\hspace{0.5cm}}($c$) if $w\in S\setminus u$ and $\dist(w,v)=2$, then the line
line $\overline{wv}$ in $S$ contains $v$ and not $u$ 
and the line $\overline{wu}$ in $S$ contains $u$ and not $v$.

Since $S$ does not have the
De Bruijn - Erd\H{o}s property, we have $\overline{uv}\ne S$; since
$u$ and $v$ are twins, it follows that\\
\mbox{\hspace{0.5cm}}($d$) there is a $w$ in $S\setminus u$ such that 
$\dist(w,v)=2$.\\
From ($a$), ($b$), ($c$), ($d$), we conclude that\\
\mbox{\hspace{0.5cm}}($e$) the number of lines in $S$ exceeds the
number of  lines in $S\setminus u$.\\
Since $S$ does not have the
De Bruijn - Erd\H{o}s property, the number of lines in $S$ is less
than $\abs{S}$, and so ($e$) implies that the number of lines in
$S\setminus u$ is less than 
$\abs{S\setminus u}$; since $S\setminus
u$ has the De Bruijn - Erd\H{o}s property, it follows that\\ 
\mbox{\hspace{0.5cm}}($f$) $S\setminus u$ has a universal line.\\ 
Since $S$ does not have the De Bruijn - Erd\H{o}s property,\\
\mbox{\hspace{0.5cm}}($g$) $S$ has no universal line.\\
Facts ($a$), ($f$), and ($g$) together imply that some line
$\overline{wv}$ in $S\setminus u$ is universal.  
Now ($b$) and ($g$) together imply that 
$\dist(w,v)=2$; since $u,v$ are twins, it follows that 
$\dist(u,v)=2$ and $\dist(w,u)=2$. 
Since $\overline{wv}$ is a universal line in
$S\setminus u$, we have $\dist(w,y)=\dist(v,y)=1$ whenever
$y\not\in\{u,v,w\}$; since $u,v$ are twins, it follows that 
$\dist(u,y)=1$ whenever
$y\not\in\{u,v,w\}$. 
\end{proof}

\begin{claim}\label{c2}
No critical $1$-$2$ metric space contains a pair of twins. 
\end{claim}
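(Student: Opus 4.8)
The plan is to argue by contradiction: assume a critical $1$-$2$ metric space $S$ contains twins $u,v$ and deduce that $S$ actually has the De Bruijn - Erd\H{o}s property. First I would feed $u,v$ into Claim~\ref{c1} to obtain the third point $w$ with $\dist(u,w)=\dist(v,w)=2$ and $\dist(x,y)=1$ whenever $x\in\{u,v,w\}$ and $y\notin\{u,v,w\}$. A quick check then shows that $u,v,w$ are pairwise twins: every pair is at distance $2$, and every point outside $\{u,v,w\}$ is at distance $1$ from all three, so interchanging $v$ and $w$ (etc.) in the twin condition changes nothing. Write $T=\{u,v,w\}$ and $R=S\setminus T$, and set $r=\abs{R}$, so $\abs{S}=r+3$; since a two-point space has a universal line and the three-point space $T$ already has three lines, a critical space has at least four points and $r\ge 1$.

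Second, I would compute the lines of $S$ explicitly in terms of this splitting. Three facts drive everything: (1) $\overline{uv}=S\setminus\{w\}$, $\overline{uw}=S\setminus\{v\}$, $\overline{vw}=S\setminus\{u\}$, three distinct co-singleton lines, each meeting $T$ in exactly two points; (2) for $a,b\in R$ the line $\overline{ab}$ in $S$ equals its trace on $R$ together with all of $T$ when $\dist(a,b)=2$, and equals its trace on $R$ and misses $T$ entirely when $\dist(a,b)=1$ (this all-or-nothing behaviour of $T$ comes from $u,v,w$ being twins, exactly as in item~($a$) of the proof of Claim~\ref{c1}); and (3) for $a\in R$ the three lines $\overline{ua},\overline{va},\overline{wa}$ coincide and equal $T\cup\{a\}\cup\{p\in R:\dist(p,a)=2\}$. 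In particular every line of $S$ meets $T$ in $0$, $2$, or $3$ points, and the only lines meeting $T$ in exactly two points are the three from~(1).

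Third, the counting. Since $S$ is a counterexample it has fewer than $r+3$ lines and no universal line, so it suffices to exhibit either a universal line or $r$ distinct lines each containing all of $T$; appending the three lines from~(1), which meet $T$ in only two points, then forces at least $r+3$ lines, a contradiction. To produce the $r$ lines I would partition $R$ into twin classes and, for a class $C$, use the single line $T\cup\{a\}\cup\{p\in R:\dist(p,a)=2\}$ from~(3) (the same set for every $a\in C$) together with the lines $\overline{ab}$ from~(2) for pairs $a,b\in C$. A class of size $s$ thus yields $1+\binom{s}{2}\ge s$ candidate $T$-containing lines, so summing over classes gives at least $r$ of them. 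If any of these lines, or any $\overline{ab}\cup T$ with $\overline{ab}=R$, happens to be universal, we are done immediately; note that this argument needs nothing about $S$ beyond Claim~\ref{c1}.

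The main obstacle is showing that the $T$-containing lines produced in the third step are genuinely distinct, i.e.\ that the assignment from $R$ really is injective. Distinctness within a single twin class is easy, since the candidate lines differ in how many points of the class they contain, the one degenerate coincidence forcing a universal line. The delicate point is ruling out coincidences between lines arising from different twin classes $C_i,C_j$: here I would compare two such sets by intersecting each with $C_i$ and with $C_j$, using that the members of a twin class share a common distance to every outside point, so the inter-class distance $\dist(C_i,C_j)$ is well defined. A putative coincidence then demands that $\dist(C_i,C_j)$ equal both $1$ and $2$, or else that two distinct points have identical distance profiles, which collapses again to a universal line. Making this final case analysis clean and exhaustive is where the real work lies; once it is settled, the contradiction is complete and no critical $1$-$2$ metric space can contain a pair of twins.
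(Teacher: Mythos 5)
Your overall strategy --- apply Claim~\ref{c1} to produce the special triple $T=\{u,v,w\}$, count the three co-singleton lines $S\setminus\{u\}$, $S\setminus\{v\}$, $S\setminus\{w\}$, and then exhibit $\abs{R}$ further lines containing all of $T$ --- is sound and close in spirit to the paper's. The gap is in the last step, exactly where you locate ``the real work'': the inter-class distinctness of your candidate lines is false as stated, and the dichotomy you propose (either $\dist(C_i,C_j)$ must be both $1$ and $2$, or two points have identical distance profiles and a universal line appears) does not hold. Take two twin classes $C_i=\{a,b\}$ and $C_j=\{a',b'\}$ of size two with $\dist(C_i,C_j)=1$, and suppose every point of $R\setminus(C_i\cup C_j)$ has the same distance to $C_i$ as to $C_j$. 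Then $\ov{ab}=T\cup\{a,b,a',b'\}\cup\{q\in R:\dist(q,C_i)=1\}=\ov{a'b'}$; intersecting with $C_i$ and $C_j$ gives no contradiction (both intersections are consistent with $d_{ij}=1$), and $a,a'$ are not twins since $\dist(a,a')=1$ and $\dist(a,b)=2\ne 1=\dist(a',b)$. Concretely, with $R=\{a,b,a',b',c,d\}$, $\dist(a,b)=\dist(a',b')=2$, $c$ at distance $2$ from $a,b,a',b'$, $d$ at distance $1$ from everything in $R$, and all remaining distances in $R$ equal to $1$, your list produces only five distinct $T$-containing lines while $\abs{R}=6$, and none of them is universal. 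So the count $\sum_C\bigl(1+\binom{\abs{C}}{2}\bigr)\ge\abs{R}$ of \emph{distinct} lines does not go through.

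What rules out such configurations in a critical space --- and what your proof omits, since you explicitly use Claim~\ref{c1} only for the original pair $u,v$ --- is a second application of Claim~\ref{c1} to twin pairs inside $R$: any twins $a,b\in R$ extend to a triple $\{a,b,e\}$ that is pairwise at distance $2$ and at distance $1$ from everything else, so in a critical space every twin class has size $1$ or $3$ and the size-$3$ classes are such special triples; in particular size-$2$ classes, the source of the coincidence above, cannot occur. This is precisely the paper's device: it extracts a \emph{maximal} family $T_1,\dots,T_k$ of pairwise disjoint special triples, obtains $3k$ distinct co-singleton lines from them, and then, because maximality leaves no twins outside $\bigcup_i T_i$, gets $\abs{S}-3k$ further pairwise distinct lines $\ov{rx}$ from a fixed $r\in T_1$ via Claim~\ref{c0}. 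You need to import that maximality argument, or explicitly establish the size-$1$-or-$3$ structure of twin classes, before your deferred case analysis can be completed; as sketched it cannot be.
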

\begin{proof}
Assume the contrary: some critical $1$-$2$ metric space $S$ contains a
pair of twins.  We will show that $S$ has at leat $\abs{S}$ lines,
contradicting the assumption that $S$ does not have the De Bruijn -
Erd\H{o}s property.  For this purpose, consider the largest set
$\{T_1,T_2,\ldots ,T_k\}$ of pairwise disjoint three-point subsets of
$S$ such that $\dist(u,v)=2$ whenever $u,v$ are distinct points in the
same $T_i$ and such that $\dist(u,x)=1$ whenever $u\in T_i$, $x\not\in
T_i$ for some $i$.  Since $S$ contains a pair of twins, Claim~\ref{c1}
guarantees that $k\ge 1$; we will derive the existence of $\abs{S}$
lines in $S$ from this fact.

Let ${\cal L}_1$ denote the set of all lines $\overline{uv}$ such that
$u,v$ are distinct points in the same $T_i$. If 
$\overline{uv}\in {\cal L}_1$, then $\overline{uv}=S\setminus w$,
where $\{u,v,w\}=T_i$ for some $i$; it follows that\\ 
\mbox{\hspace{0.5cm}}($a$) ${\cal L}_1$ consists of the $3k$ sets
$S\setminus w$ with $w$ ranging through $\cup_{i=1}^k T_i$.\\
Next, choose a point $r$ in
$T_1$ and let ${\cal L}_2$ denote the set of all lines $\overline{rx}$
such that $x\in S\setminus \cup_{i=1}^k T_i$. 
Claim~\ref{c1} and maximality of $k$ together guarantee that $S$
contains no pair $x,y$ of twins such that $x,y\in S\setminus \cup_{i=1}^k
T_i$. This fact and Claim~\ref{c0} together imply
that\\ 
\mbox{\hspace{0.5cm}}($b$) $\abs{{\cal L}_2}=\abs{S}-3k$.\\
Finally, note that each line in ${\cal L}_2$ includes all points of
$T_1$ and no points of $T_2$. This observation and ($a$) together
imply that ${\cal L}_1\cap {\cal L}_2=\emptyset$, and so 
$\abs{{\cal L}_1\cup {\cal L}_2}=\abs{S}$ by ($a$) and ($b$).
\end{proof}

Each $1$-$2$ metric space can be thought of as a complete graph with
each edge $uv$ labeled by $\dist(u,v)$. Given edges $uv, xy$ of this
complete graph, let us write $uv\approx xy$ to mean that
$\overline{uv}=\overline{xy}$.  The following fact is a direct
consequence of Claim~\ref{c0} combined with Claim~\ref{c2}.
\begin{claim}\label{c3}
Each equivalence class of the equivalence relation $\approx$ in a critical $1$-$2$
metric space is a set of pairwise disjoint edges with identical
labels or else a (not necessarily proper) subset of a cycle of length
four with alternating labels.
\end{claim}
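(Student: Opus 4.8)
The plan is to combine the two cited claims with the explicit shape of lines in a $1$-$2$ metric space. Inspecting the three betweenness alternatives when all distances lie in $\{1,2\}$, one sees that for $\dist(u,v)=1$ the line $\overline{uv}$ is $\{u,v\}$ together with every point at distance $1$ from exactly one of $u,v$, whereas for $\dist(u,v)=2$ it is $\{u,v\}$ together with every point at distance $1$ from both; I would record these two descriptions first.

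Next I would settle the coarse structure. The first bullet of Claim~\ref{c0} forces all edges of a single $\approx$-class to carry the same label. Since by Claim~\ref{c2} the critical space has no twins, the third bullet of Claim~\ref{c0} applies to any two label-$1$ edges sharing a vertex and the second bullet to any two label-$2$ edges sharing a vertex; either way the edges fall in different classes. Hence no two edges of a class meet, so every class is a set of pairwise disjoint edges with a common label---already the first alternative.

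The sharper second alternative, for a class with exactly two edges, is where the only real computation lies. Given $\overline{u_1u_2}=\overline{u_3u_4}$ with common label, I would feed the memberships $u_3,u_4\in\overline{u_1u_2}$ and $u_1,u_2\in\overline{u_3u_4}$ through the two line-descriptions to constrain the four cross distances. In the label-$2$ case these force each endpoint of one edge to lie at distance $1$ from both endpoints of the other, so all four cross distances equal $1$; with $\dist(u_1,u_2)=\dist(u_3,u_4)=2$ the cycle $u_1u_2u_3u_4$ then has the alternating labels $2,1,2,1$, and the class is its pair of opposite label-$2$ edges. In the label-$1$ case each of the four points must lie at distance $1$ from exactly one endpoint of the opposite edge, and a short case check forces, up to swapping $u_3,u_4$, that $\dist(u_1,u_3)=\dist(u_2,u_4)=1$ and $\dist(u_1,u_4)=\dist(u_2,u_3)=2$; the same cycle $u_1u_2u_3u_4$ now has the alternating labels $1,2,1,2$, with the class on its two opposite label-$1$ edges.

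The main obstacle is exactly this pinning-down---checking that mutual membership on a shared line leaves the four-point metric no option but an alternating $4$-cycle. With that settled the dichotomy is complete: a class with three or more disjoint edges spans at least six points and so cannot sit inside a single $4$-cycle, leaving it in the first alternative.
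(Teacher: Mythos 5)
There is a genuine gap, and it sits exactly where the second alternative of the claim comes from. Your reduction to ``pairwise disjoint edges with a common label'' rests on the assertion that the first bullet of Claim~\ref{c0} forces all edges of an $\approx$-class to carry the same label. But that bullet is stated for \emph{four distinct points}, i.e.\ it compares only \emph{disjoint} edges; and the second and third bullets treat only concurrent edges with \emph{equal} labels. Nothing in Claims~\ref{c0} and~\ref{c2} excludes two edges that share a vertex and carry \emph{different} labels from lying in one class, and indeed this happens: with $\dist(a,b)=\dist(a,c)=1$, $\dist(b,c)=2$ your own line descriptions give $c\in\ov{ab}$ and $a\in\ov{bc}$, and one easily completes this to a $1$-$2$ space with $\ov{ab}=\ov{bc}$ non-universal. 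So your conclusion ``no two edges of a class meet'' does not follow, and the paper's later arguments depend on its failure: in Claims~\ref{c7} and~\ref{c6} the relevant classes are alternating paths and $4$-cycles, which contain concurrent edges of different labels. The computation you then devote to two disjoint equally labelled edges proves a refinement the claim does not assert (such a class already falls under the first alternative), while the case the $4$-cycle alternative is actually about goes untreated.

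The argument the paper has in mind is the following. From Claims~\ref{c0} and~\ref{c2} one gets, for edges in a common class: (i) disjoint edges have equal labels; (ii) concurrent edges have different labels. If all edges of the class are pairwise disjoint, (i) gives the first alternative. Otherwise the class contains concurrent edges $u_1u_2,u_2u_3$ with $\dist(u_1,u_2)\ne\dist(u_2,u_3)$. By (ii) no vertex has degree $3$ in the class (its three edges would need three pairwise distinct labels), and by (i) any further edge cannot miss both $u_1u_2$ and $u_2u_3$ (it would have to share a label with each); hence every further edge meets $\{u_1,u_3\}$, avoids $u_2$, and is not $u_1u_3$, so it is $u_3u_4$ or $u_1u_4'$ for new vertices. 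Repeating the argument with the new concurrent pair shows the class is contained in the $4$-cycle $u_1u_2u_3u_4$, whose labels alternate by (ii). Your closing remark about classes with three or more disjoint edges is correct but belongs to the first alternative, not the second.
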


\begin{claim}\label{c4}
The size of each equivalence class of the equivalence relation $\approx$ in a critical $1$-$2$
metric space on $n$ points is at most $\max\{(n-1)/2, 4\}$.
\end{claim}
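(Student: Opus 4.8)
The plan is to invoke Claim~\ref{c3} and treat its two structural possibilities for an equivalence class $C$ separately, bounding $\abs{C}$ in each. The first case, when $C$ is a subset of a four-cycle, is immediate: a four-cycle has only four edges, so $\abs{C}\le 4\le\max\{(n-1)/2,4\}$. The work is entirely in the second case, when $C$ is a set of pairwise disjoint edges.

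In that case I would first note that, by the very definition of the relation $\approx$, all edges of $C$ determine one and the same line $L$. Since every point that defines a line lies on that line (taking $p=u$ or $p=v$ in the defining relation shows $u,v\in\overline{uv}$), the line $L$ contains both endpoints of every edge of $C$. Because the edges of $C$ are pairwise disjoint, these endpoints are $2\abs{C}$ distinct points, all of them lying on $L$. The crucial step is then to rule out that $L$ exhausts the whole space: a critical $1$-$2$ metric space is a counterexample to Theorem~\ref{only}, hence fails the De Bruijn--Erd\H{o}s property, and in particular has no universal line (this is exactly fact ($g$) in the proof of Claim~\ref{c1}). Thus $L\ne S$, so at least one point of $S$ lies off $L$, whence $2\abs{C}\le\abs{L}\le n-1$ and $\abs{C}\le (n-1)/2\le\max\{(n-1)/2,4\}$, as desired.

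The main obstacle, insofar as there is one, is recognizing that the absence of a universal line is precisely what forbids the disjoint edges of $C$ from forming a perfect matching. Without that observation one could only bound a matching by $\lfloor n/2\rfloor$ edges, which for even $n$ would give $\abs{C}\le n/2$ and miss the claimed bound; the no-universal-line property tightens this to $2\abs{C}\le n-1$ uniformly in the parity of $n$. Everything else in the argument is routine bookkeeping, so I expect the proof to be short.
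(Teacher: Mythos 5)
Your proof is correct and is essentially the paper's own argument: the paper likewise derives the bound from Claim~\ref{c3} together with the observation that a class of $n/2$ pairwise disjoint edges would define a universal line, which a critical space cannot have. You have merely written out the direct form of the same point, bounding $2\abs{C}\le\abs{L}\le n-1$ rather than excluding the perfect-matching case by contradiction.
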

\begin{proof}
This is a direct corollary of Claim~\ref{c3} combined with the
observation that an equivalence class of $n/2$ pairwise disjoint edges
defines a universal line.
\end{proof}

\begin{claim}\label{c5}
Every critical $1$-$2$ metric space has at most $7$ points.
\end{claim}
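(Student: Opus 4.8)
The plan is to count the edges of the underlying complete graph against the number of lines. Write $n$ for the number of points. Every line of the space is $\overline{uv}$ for some edge $uv$, and two edges determine the same line exactly when they are $\approx$-equivalent, so the number of lines equals the number of equivalence classes of $\approx$. Since a critical space lacks the De Bruijn--Erd\H os property and, as recorded by fact ($g$) in the proof of Claim~\ref{c1}, has no universal line, it has fewer than $n$ lines; hence $\approx$ has at most $n-1$ classes. The $\binom{n}{2}$ edges are partitioned among these classes, and Claim~\ref{c4} bounds the size of each class by $\max\{(n-1)/2,4\}$. This yields the master inequality $\binom{n}{2}\le (n-1)\max\{(n-1)/2,4\}$.

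First I would dispose of the case $n\ge 9$, where $\max\{(n-1)/2,4\}=(n-1)/2$: the inequality becomes $\tfrac{n(n-1)}{2}\le\tfrac{(n-1)^2}{2}$, i.e. $n\le n-1$, which is absurd. Hence $n\le 8$, and it remains only to exclude $n=8$.

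The case $n=8$ is the sole delicate point, because here the counting is too crude to conclude on its own: the bound reads $28=\binom{8}{2}\le 7\cdot 4=28$, an equality. I would exploit that equality. It forces $\approx$ to have exactly $7$ classes, each of size exactly $4$ (fewer classes, or any class of size below $4$, would make the total strictly less than $28$). By Claim~\ref{c3} a class of size $4$ is either four pairwise disjoint edges or an entire four-cycle with alternating labels. The former is a perfect matching on the $8$ points, which by the observation in the proof of Claim~\ref{c4} defines a universal line and is therefore impossible in a critical space. So every one of the $7$ classes must be a four-cycle with alternating labels.

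To finish I would run a parity argument, which I expect to be the crux. Fix a point $v$; its $7$ incident edges are distributed among the classes, and each edge $vw$ lies in a single class, namely a four-cycle of which $v$ is necessarily a vertex. But a vertex of a four-cycle meets exactly two of the cycle's four edges, so each class that contains an edge at $v$ accounts for exactly two of $v$'s edges. Summing over such classes shows that $v$ has an even number of incident edges, contradicting that $v$ is joined to the other $7$ points. This rules out $n=8$, whence every critical $1$-$2$ metric space has at most $7$ points. The main obstacle is precisely this boundary value $n=8$, where the edge count only gives equality and must be supplemented by the structural dichotomy of Claim~\ref{c3} together with the degree-parity observation.
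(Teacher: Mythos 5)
Your proposal is correct and follows essentially the same route as the paper: bound the number of $\approx$-classes by $n-1$, apply Claim~\ref{c4} to get $n\le 8$, and kill $n=8$ by noting that all seven classes would have to be four-cycles, which is impossible by the odd-degree parity argument. Your explicit elimination of the perfect-matching alternative for $n=8$ via the universal-line observation is a small point the paper leaves implicit, but the substance is identical.
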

\begin{proof}
Consider an arbitrary critical $1$-$2$ metric space and let $n$ denote
the number of its points.  Since this space does not have the De
Bruijn - Erd\H{o}s property, it has fewer than $n$ lines, and so its
equivalence relation $\approx$ partitions the $n(n-1)/2$ edges of its
complete graph into at most $n-1$ classes. Since the largest of these
classes has size at least $n/2$, Claim~\ref{c4} implies that $n/2\le 
\max\{(n-1)/2, 4\}$, and so $n\le 8$. If $n=8$, then the $28$ edges of
the complete graph are partitioned into $7$ equivalence classes of
size $4$. By Claim~\ref{c3}, each of these equivalence classes is a
cycle of length four. But this is impossible, since the edge set of the
complete graph on eight vertices cannot be partitioned into cycles:
each vertex of this graph has an odd degree.
\end{proof}

\begin{claim}\label{c6}
No critical $1$-$2$ metric space has $7$ points.
\end{claim}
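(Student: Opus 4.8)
The plan is to carry the counting of Claim~\ref{c5} one step further and then finish with a degree--parity argument of the same flavour as the one that disposed of $n=8$. Let $S$ be a critical $1$-$2$ metric space on $7$ points. Since $S$ lacks the De Bruijn--Erd\H{o}s property it has fewer than $7$ lines, so $\approx$ has at most $6$ classes; as $\binom{7}{2}=21$ and no class has more than $4$ edges by Claim~\ref{c4}, five classes could cover at most $20<21$ edges, and hence there are exactly $6$ classes. A one-line check then shows that their sizes must form one of the multisets $(4,4,4,4,4,1)$, $(4,4,4,4,3,2)$, $(4,4,4,3,3,3)$. By Claim~\ref{c3} each class of size $4$ is a four-cycle, while each smaller class is a matching (a set of pairwise disjoint edges).

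The engine of the proof is the observation that a four-cycle meets every vertex in an even number of edges ($0$ or $2$), whereas a matching meets it in $0$ or $1$. Since every vertex of the underlying complete graph $K_7$ has degree $6$, the edges lying in the matching classes must induce an even degree at every vertex. Writing $M$ for the union of the matching classes, $M$ therefore has all degrees even, and its maximum degree is at most the number of matching classes, each matching meeting a vertex at most once.

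I would then eliminate the three multisets in turn. In the case $(4,4,4,4,4,1)$ the set $M$ is a single edge, so its two endpoints have odd degree---impossible. In the case $(4,4,4,3,3,3)$ the set $M$ has $9$ edges and is a union of three matchings, so every degree is even and at most $3$, hence lies in $\{0,2\}$; but then $M$ would require $9$ vertices of degree $2$, exceeding the $7$ available. The remaining case $(4,4,4,4,3,2)$ is the delicate one, and I expect it to be the main obstacle, since here naive edge-counting is consistent: $M$ has $5$ edges and is a union of only two matchings, so all degrees lie in $\{0,2\}$ and $M$ is a disjoint union of cycles on $5$ edges, forcing a single $5$-cycle. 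The way out is that a union of two matchings is $2$-edge-colourable and hence bipartite, so it can contain no odd cycle, and in particular no $5$-cycle. This contradiction, together with the two easy cases, shows that no critical $1$-$2$ metric space has $7$ points.
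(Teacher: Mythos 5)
Your reduction to the three size-multisets $(4,4,4,4,4,1)$, $(4,4,4,4,3,2)$, $(4,4,4,3,3,3)$ is correct, the identification of every size-$4$ class with a four-cycle is correct (four pairwise disjoint edges would need eight vertices), and the parity argument does kill $(4,4,4,4,4,1)$. But the proof breaks at the assertion that ``each smaller class is a matching.'' Claim~\ref{c3} allows an equivalence class to be a \emph{not necessarily proper subset} of a four-cycle with alternating labels, so a class of size $3$ may be a path of three edges and a class of size $2$ may be a pair of adjacent edges. These are not exotic possibilities: the proof of Claim~\ref{c7} is organized precisely around a size-$3$ class consisting of three consecutive edges $uv,vw,wx$ of an alternating four-cycle. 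Once paths are admitted, both remaining cases lose their contradictions. In $(4,4,4,4,3,2)$, take the size-$3$ class to be a path $u$--$v$--$w$--$x$ and the size-$2$ class to be a path $u$--$y$--$x$: their union is a $5$-cycle with all degrees even, and it is not a union of two matchings, so neither your degree bound nor the two-edge-colourability argument applies. In $(4,4,4,3,3,3)$, a vertex interior to two paths has degree $4$ in $M$, so the degrees need not lie in $\{0,2\}$; and even restricting to odd-degree counts, two size-$3$ matchings missing distinct vertices $a_1,a_2$ together with a path whose endpoints are exactly $a_1,a_2$ satisfy every parity constraint you impose.

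The deeper issue is that your argument uses only the graph-theoretic shapes of the classes and degree parity in $K_7$, and never the metric or the definition of lines; parity alone does not rule out the surviving configurations, and it is not clear that the purely combinatorial statement (that $K_7$ admits no partition into at most six classes of the allowed shapes) is even true without metric input. The paper's proof does descend to the distances: it takes three edge-disjoint four-cycles $G_1,G_2,G_3$, shows two of them share exactly two vertices $u,v$ with $uv$ an edge of one and a non-edge of the other, uses the alternating labels and the line equalities to pin down $\dist(u,v)=1$, $\dist(v,w)=2$, $\dist(v,y)=2$, and then derives the contradiction $y\in\ov{vw}$ with $\dist(v,w)=\dist(v,y)=2$. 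To repair your proof you would need either to exclude path-shaped classes by a separate metric argument or to handle the configurations they create, which is essentially where the paper's case analysis begins.
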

\begin{proof}
Consider an arbitrary critical $1$-$2$ metric space on $7$
points. Since this space does not have the De
Bruijn - Erd\H{o}s property, it has fewer than $7$ lines, and so its
equivalence relation $\approx$ partitions the $21$ edges of its
complete graph into at most $6$ classes. By Claim~\ref{c4}, each of
these classes has size at most $4$, and so at least three of them have
size precisely $4$; by By Claim~\ref{c3}, each of these three classes
is a cycle of length four. Let $G_1,G_2,G_3$ denote these three
subgraphs of the complete graph on seven vertices.

Since $G_1,G_2,G_3$ are pairwise edge-disjoint, every two of them
share at most two vertices; since their union has only seven vertices,
some two of them share at least two vertices; we may assume (after a
permutation of subscripts if necessary) that $G_1$ and $G_2$ share
precisely two vertices. Let us name these two vertices $u,v$. Since
$G_1$ and $G_2$ are edge-disjoint, we may assume (after a switch of
subscripts if necessary) that vertices $u,v$ are adjacent in $G_1$
and nonadjacent in $G_2$.

Next, we may name $w,x$ the remaining two vertices in $G_1$ in such a
way that the four edges of $G_1$ are $uv,vw,wx,ux$; we may name $y,z$
the remaining two vertices in $G_2$ in such a way that the four edges
of $G_2$ are $uy,uz,vz,vy$.  Since the labels on the edges of $G_2$
alternate, we may assume (after switching $y$ and $z$ if necessary)
that $\dist(u,y)=1$, $\dist(u,z)=2$, $\dist(v,z)=1$, $\dist(v,y)=2$.
Since $\ov{uy}=\ov{vy}$, we have $u\in\ov{vy}$; since $\dist(v,y)=2$,
it follows that $\dist(u,v)=1$. In turn, since the labels on the edges
of $G_1$ alternate, we have $\dist(v,w)=2$, $\dist(w,x)=1$,
$\dist(u,x)=2$.

Now $\dist(y,u)+\dist(u,v)=\dist(y,v)$, and so $y\in \ov{uv}$; since
$uv\approx vw$, it follows that $y\in \ov{vw}$. But this is
impossible, since $\dist(v,w)=2$ and $\dist(v,y)=2$.
\end{proof}

\begin{claim}\label{c7}
Every critical $1$-$2$ metric space on $5$ or $6$ points 
contains points $u,v,w,x,y$ such that 
\begin{gather*}
\dist(u,w) = \dist(u,x) = \dist(v,w) = \dist(v,x) = 1,\\
 \dist(u,v) = \dist(w,x) = 2,\\
\dist(u,y) \ne \dist(v,y),\;\dist(w,y) \ne \dist(x,y).
\end{gather*}
\end{claim}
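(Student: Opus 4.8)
The plan is to leverage the structural constraints from Claims~\ref{c3}--\ref{c6}. By Claim~\ref{c5} and Claim~\ref{c6}, a critical space on $5$ or $6$ points exists only if the De Bruijn--Erd\H os property fails, so it has fewer than $n$ lines; hence the $\approx$-classes number at most $n-1$ and partition the $\binom{n}{2}$ edges. I would first argue that some equivalence class must be a four-cycle with alternating labels rather than a matching: counting edges shows that with only $n-1$ classes available, at least one class must be large, and by Claim~\ref{c3} a large class is either a matching or a four-cycle. For $n=5$ or $6$ the existence of such a four-cycle should follow from the same averaging used in Claim~\ref{c6}, since a matching covering half the points would yield a universal line (Claim~\ref{c4}) and contradict criticality.

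Once a four-cycle $C$ with alternating labels is in hand, I would extract its four vertices as the candidates $u,v,w,x$. The four-cycle structure of Claim~\ref{c3} gives exactly the distance pattern asserted: two opposite pairs at distance $1$ and the two crossing pairs at distance $2$, so after naming the vertices appropriately we obtain
\begin{gather*}
\dist(u,w)=\dist(u,x)=\dist(v,w)=\dist(v,x)=1,\\
\dist(u,v)=\dist(w,x)=2.
\end{gather*}
The fact that $\ov{uv}=\ov{wx}$ (they lie in the same $\approx$-class) encodes exactly that these four points determine a single line, which is the geometric content we will exploit.

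The main obstacle, and the crux of the claim, is producing the fifth point $y$ with $\dist(u,y)\neq\dist(v,y)$ and $\dist(w,y)\neq\dist(x,y)$. Here I would argue by contradiction: suppose no such $y$ exists among the remaining $n-4\ge 1$ points. Then every other point $y$ satisfies $\dist(u,y)=\dist(v,y)$ or $\dist(w,y)=\dist(x,y)$. The first equality makes $u,v$ behave like twins with respect to $y$, and since $\dist(u,v)=2$, if it held for \emph{all} remaining $y$ then $u,v$ would be genuine twins---contradicting Claim~\ref{c2}. Thus some point $y_1$ separates $u,v$ and (symmetrically) some $y_2$ separates $w,x$. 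The work is to show these can be taken to be the same point: I would show that if $y_1\neq y_2$, the distances forced among $u,v,w,x,y_1,y_2$ either create a new twin pair (contradicting Claim~\ref{c2}) or force a universal line or an extra line pushing the line count up to $n$, contradicting criticality. This case analysis over the small number of points---exploiting that $\ov{uv}=\ov{wx}$ constrains where separating points can sit---is where the real effort lies.
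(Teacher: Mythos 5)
Your setup matches the paper's up to the point where the fifth point $y$ must be produced, but the crucial step is left unexecuted, and the plan for it is shakier than you suggest. Two concrete problems. First, the counting only guarantees an $\approx$-class of size at least $3$, i.e.\ three edges of an alternating four-cycle, not the whole cycle; if those three edges carry labels $1,2,1$ (the case $\ov{vw}=\ov{wx}=\ov{ux}$), then the equality $\ov{uv}=\ov{wx}$ that your third paragraph leans on is simply not available, and the distance pattern on $u,v,w,x$ --- in particular the two ``diagonal'' distances, which the four-cycle structure by itself does not determine --- has to be extracted from whichever three line equalities you actually have. Second, and more seriously, the heart of the claim, namely that a \emph{single} $y$ separates both $\{u,v\}$ and $\{w,x\}$, is exactly the part you defer to an unspecified ``case analysis over the small number of points.'' Producing a witness $y_1$ for $u,v$ and a witness $y_2$ for $w,x$ from Claim~\ref{c2} is fine (and correct: the four known distances do make $u,v$ twins if every other point fails to separate them), but merging the two witnesses is where the theorem lives, and nothing in your sketch indicates how the forced distances would yield a twin pair, a universal line, or an extra line.

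For comparison, the paper never merges two witnesses: it takes the single $y$ with $\dist(w,y)\ne\dist(x,y)$, assumes $\dist(u,y)=\dist(v,y)$ for contradiction, and uses the \emph{adjacent} equality $\ov{vw}=\ov{wx}$ (not $\ov{uv}=\ov{wx}$) to conclude $y\notin\ov{vw}$ and hence $\dist(v,y)=\dist(w,y)$; this forces $y\in\ov{ux}$, which rules out one of the two three-edge cases and pins down every distance from $y$ to $u,v,w,x$; a second witness $z$ for the non-twin pair $u,v$ then necessarily lies on $\ov{ux}$ or $\ov{vx}$ and makes that line universal. You would need either to reproduce an argument of this kind or to actually carry out your $y_1\ne y_2$ case analysis; as written, the proposal is a plan rather than a proof.
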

\begin{proof}
Consider an arbitrary critical $1$-$2$ metric space on $n$ points
such that $n=5$ or $n=6$. Since this space does not have the De Bruijn
- Erd\H{o}s property, it has fewer than $n$ lines, and so its
equivalence relation $\approx$ partitions the $n(n-1)/2$ edges of its
complete graph into at most $n-1$ classes. Since the largest of these
classes has size at least $3$, Claim~\ref{c3} and the absence of a
universal line together imply that there are points $u,v,w,x,y$ such
that 
\[
\dist(u,v)=2, \dist(v,w)=1, \dist(w,x)=2 \;\mbox{ and }\; \ov{uv}=\ov{vw}=\ov{wx}
\]
or else
\[
\dist(v,w)=1, \dist(w,x)=2, \dist(u,x)=1 \;\mbox{ and }\; \ov{vw}=\ov{wx}=\ov{ux}.
\]
In both cases, equality of the three lines implies that
\begin{gather*}
\dist(u,w) = \dist(u,x) = \dist(v,w) = \dist(v,x) = 1,\\
 \dist(u,v) = \dist(w,x) = 2.
\end{gather*}
Since $w,x$ are not twins, there is a point $y$ distinct from both of
them and such that $\dist(w,y) \ne \dist(x,y)$; we will complete the
proof by showing that $\dist(u,y) \ne \dist(v,y)$. 

To do this, assume the contrary: $\dist(u,y) = \dist(v,y)$.  Since
$y\not\in \ov{wx}$ and $\ov{vw}=\ov{wx}$, we have $y\not\in \ov{vw}$,
and so $\dist(v,y)=\dist(w,y)$. Now $\dist(u,y) \ne \dist(x,y)$, and
so $y\in\ov{ux}$; since $y\not\in \ov{wx}$, we cannot have
$\ov{vw}=\ov{wx}=\ov{ux}$, and so we must have
$\ov{uv}=\ov{vw}=\ov{wx}$. In particular, $y\not\in\ov{uv}$; since
$\dist(u,y) = \dist(v,y)$, we conclude that
\[
\dist(u,y) = \dist(v,y)=\dist(w,y)=2, \dist(x,y)=1.
\]
Since $u,v$ are not twins, there is a point $z$ distinct from both of
them and such that $\dist(u,z) \ne \dist(v,z)$; it follows that 
$\dist(x,z)$ is distinct from one of $\dist(u,z)$, $\dist(v,z)$, and
so $z$ belongs to one of the lines $\ov{ux}, \ov{vx}$. But then this
line is universal, a contradiction. 
\end{proof}

\begin{claim}\label{c8}
No critical $1$-$2$ metric space has $5$ or $6$ points.
\end{claim}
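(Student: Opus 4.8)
The plan is to feed the five points $u,v,w,x,y$ produced by Claim~\ref{c7} into a direct count of lines, splitting according to whether the space $S$ has $n=5$ or $n=6$ points. The computations rest on the elementary description of lines in a $1$-$2$ metric space: if $\dist(a,b)=2$, then $\ov{ab}$ consists of $a,b$ together with every point $p$ satisfying $\dist(p,a)=\dist(p,b)=1$; if $\dist(a,b)=1$, then $\ov{ab}$ consists of $a,b$ together with every point $p$ satisfying $\dist(p,a)\ne\dist(p,b)$. Since the defining relations of Claim~\ref{c7} are invariant under interchanging $u$ with $v$ and under interchanging $w$ with $x$, I may first normalize the position of $y$ and assume $\dist(u,y)=\dist(w,y)=1$ and $\dist(v,y)=\dist(x,y)=2$; this pins down the entire distance table on $\{u,v,w,x,y\}$.

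With this normalization, each of $v,w,y$ distinguishes $u$ from $x$, so the line $\ov{ux}$ contains all of $u,v,w,x,y$. If $n=5$ this line is universal, contradicting the absence of a universal line in a critical space, and that case is finished immediately.

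The remaining, and harder, case is $n=6$. Here the plan is to exhibit the six lines $\ov{ux},\ \ov{uy},\ \ov{uv},\ \ov{uw},\ \ov{wy},\ \ov{vy}$ and to certify that they are pairwise distinct using only their traces on $\{u,v,w,x,y\}$ (that is, their intersections with these five points). A short computation from the distance table gives these traces as $\{u,v,w,x,y\}$, $\{u,x,y\}$, $\{u,v,w,x\}$, $\{u,v,w,x\}$, $\{v,w,y\}$, $\{v,w,y\}$ respectively. Two of the traces, those of $\ov{ux}$ and $\ov{uy}$, are shared with no other line, so these two lines are automatically distinct from all the rest; the remaining four lines fall into the two same-trace pairs $\{\ov{uv},\ov{uw}\}$ and $\{\ov{vy},\ov{wy}\}$, and within each pair one edge carries label $2$ while the other carries label $1$, so the first bullet of Claim~\ref{c0} forces the two lines of the pair apart. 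Hence $S$ has at least six distinct lines, contradicting the fact that a critical space on six points has fewer than $\abs{S}=6$ lines.

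The main obstacle is exactly this $n=6$ case: because the sixth point $t$ has undetermined distances to $u,v,w,x,y$, none of the six lines can be computed exactly, and $t$ might in principle lie on any of them. The point I expect to be decisive is that distinctness never requires knowing whether $t$ lies on a given line — every separation is witnessed either by a point of $\{u,v,w,x,y\}$ (via differing traces) or by the label dichotomy of Claim~\ref{c0} — so the indeterminate sixth point can be ignored entirely, and no case analysis on $t$ is needed.
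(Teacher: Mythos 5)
Your normalization of $y$ and your disposal of the $n=5$ case are correct and agree with the paper (the paper in fact notes that both $\ov{ux}$ and $\ov{vw}$ contain all of $u,v,w,x,y$, and uses this to pin down the sixth point). The trace computations for the six lines are also correct. The gap is in the final step: the first bullet of Claim~\ref{c0} is stated for \emph{four distinct points}, and the two pairs you apply it to, $\{\ov{uv},\ov{uw}\}$ and $\{\ov{vy},\ov{wy}\}$, are pairs of edges sharing a vertex ($u$ and $y$ respectively). The four-point hypothesis is not a technicality: in a three-point space with $\dist(a,b)=\dist(b,c)=1$ and $\dist(a,c)=2$ one has $\ov{ab}=\ov{ac}=\{a,b,c\}$ even though the labels differ.

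Concretely, your conclusion fails for the sixth point $z$ at distance $2$ from each of $u,v,w,x,y$ (one of the legitimate cases, since it puts $z$ outside both $\ov{ux}$ and $\ov{vw}$). There $z\in\ov{uv}$ would require $\dist(z,u)=\dist(z,v)=1$ and $z\in\ov{uw}$ would require $\dist(z,u)\ne\dist(z,w)$, so $z$ lies on neither, and $\ov{uv}=\ov{uw}=\{u,v,w,x\}$; likewise $\ov{vy}=\ov{wy}=\{v,w,y\}$. Your six lines collapse to four. That space does still have at least six lines, but only because the five pairs $\ov{zp}$ with $p\in\{u,v,w,x,y\}$ are five distinct two-point lines --- i.e., the missing lines pass through the very point you argue can be ignored. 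So the sixth point cannot be ignored: you need to derive, as the paper does, that $z$ lies outside $\ov{ux}$ and $\ov{vw}$, and then examine the resulting handful of distance patterns for $z$ (the paper reduces them to six spaces by symmetry and checks each).
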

\begin{proof}
Consider an arbitrary critical $1$-$2$ metric space on $n$ points such
that $n=5$ or $n=6$ and let $u,v,w,x,y$ be as in Claim~\ref{c7}. We
may assume (after a cyclic shift of $u,w,v,x$ if necessary) that
\begin{gather*}
\dist(u,w) = \dist(u,x) = \dist(v,w) = \dist(v,x) = 1,\\
 \dist(u,v) = \dist(w,x) = 2,\\
\dist(u,y)=\dist(w,y)=1,\;
\dist(v,y)=\dist(x,y)=2.
\end{gather*}
Since 
\[
\ov{ux}\supseteq \{u,v,w,x,y\} \;\mbox{ and }\; 
\ov{vw}\supseteq \{u,v,w,x,y\},
\]
absence of a universal line implies that $n=6$ and that the sixth
point of our space lies outside the lines $\ov{ux}$ and $\ov{vw}$.
Let $z$ denote this sixth point. Since $z\not\in\ov{ux}$,
$z\not\in\ov{vw}$, we have $\dist(u,z)=\dist(x,z)$,
$\dist(v,z)=\dist(w,z)$, and so symmetry allows us to distinguish
between three cases:\\ 
\mbox{\hspace{0.5cm}} $\bullet$ $\dist(u,z)=\dist(x,z)=1$,
$\dist(v,z)=\dist(w,z)=1$,\\
\mbox{\hspace{0.5cm}} $\bullet$ $\dist(u,z)=\dist(x,z)=1$,
$\dist(v,z)=\dist(w,z)=2$,\\
\mbox{\hspace{0.5cm}} $\bullet$ $\dist(u,z)=\dist(x,z)=2$,
$\dist(v,z)=\dist(w,z)=2$.\\
Each of these three cases comprises two metric spaces, one with 
$\dist(y,z)=1$ and the other with $\dist(y,z)=2$. Altogether, there
are six metric spaces on six points to inspect; each of them has at least six lines.
\end{proof}

\begin{claim}\label{c9}
Every metric space on $2$, $3$, or $4$ points has the De Bruijn
- Erd\H{o}s property.
\end{claim}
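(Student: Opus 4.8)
The plan is to treat the three sizes $n=2,3,4$ separately, with essentially all the work in $n=4$. For $n=2$ I would just observe that the single line $\ov{uv}$ contains both of its defining points (each endpoint satisfies a degenerate betweenness relation), so it is universal and the property holds. For $n=3$ I would split on whether the three points are metrically collinear, i.e.\ whether one of the three triangle inequalities holds with equality. If they are, then each point lies on the line through the other two, so any one of the three lines is universal. If instead the three points form a strict triangle, then no point lies on the line of the opposite pair, so $\ov{uv},\ov{uw},\ov{vw}$ are three distinct two-point sets, giving $3=n$ distinct lines.

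For $n=4$ I would assume there is no universal line and aim to produce at least four distinct lines among the six pairs. First I would record two structural facts, valid in any metric space under this assumption. (i) If $\ov{uv}=\ov{xy}$ for two \emph{disjoint} edges of the complete graph $K_4$, then that common line contains all four points and is universal; hence two edges with the same line must share a vertex. (ii) If $\ov{uv}=\ov{uw}$, then $c$-type membership gives that $u,v,w$ are collinear, and since no line is universal this common line is exactly $\{u,v,w\}$. Consequently every $\approx$-class of size at least $2$ is confined to a single triangle of $K_4$, and its line is that triangle's three-point vertex set.

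Next I would argue by counting. Suppose, for contradiction, that there are at most three distinct lines, so the six edges of $K_4$ fall into at most three $\approx$-classes, each of size at most $3$; the only size distributions are $(3,3)$, $(3,2,1)$, and $(2,2,2)$. I would discard $(3,3)$ because any two triangles in $K_4$ share an edge, which a partition forbids. If some class has size $3$, say the triangle $\{a,b,c\}$ collapses to the line $\{a,b,c\}$, then $d$ lies off this line, so none of the triples $\{a,b,d\},\{a,c,d\},\{b,c,d\}$ is collinear; by (ii) the edges $ad,bd,cd$ then yield three pairwise distinct lines, each also distinct from $\{a,b,c\}$, for a total of at least four lines, contradicting the assumption. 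This eliminates every distribution containing a $3$.

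The remaining case $(2,2,2)$ is the genuine obstacle, and the hard part will be seeing that it cannot occur without a universal line. Here I would use that a partition of $K_4$ into three two-edge paths is essentially unique: each path has one center, the three centers are distinct vertices, and the fourth vertex, say $a$, is then an endpoint of all three paths. Up to relabeling the paths are centered at $b,c,d$ with endpoint sets $\{a,c\},\{a,d\},\{a,b\}$, giving $\ov{ab}=\ov{bc}$ and $\ov{ad}=\ov{bd}$. The first forces $a,b,c$ collinear, hence $c\in\ov{ab}$; the second forces $a,b,d$ collinear, hence $d\in\ov{ab}$. But then $\ov{ab}\supseteq\{a,b,c,d\}$ is universal, the desired contradiction. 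Thus at most three lines is impossible, so $n=4$ yields at least four distinct lines, completing the claim.
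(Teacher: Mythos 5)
Your proof is correct, and for the only substantive case, $n=4$, it takes a genuinely different route from the paper's. The paper classifies by line cardinality: if every line has exactly $2$ points there are $\binom{n}{2}\ge n$ of them; if some line has $n$ points it is universal; otherwise some line $T$ has exactly $3$ points, forcing $n=4$, and then either two of the three lines joining the fourth point $w$ to $T$ coincide (which makes the line through the corresponding two points of $T$ universal) or those three lines are pairwise distinct and, together with $T$, already give four lines. You instead work with the equivalence classes of $\approx$: your facts (i) and (ii) are the general-metric-space observations underlying Claims~\ref{c0} and~\ref{c4} (two disjoint edges sharing a line force a universal line; two concurrent edges sharing a line force a collinear triple and pin the line down to that triple), and you then enumerate the possible partitions of the six edges of $K_4$ into at most three classes, disposing of $(3,3)$, of any distribution containing a $3$, and of $(2,2,2)$ separately. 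Both arguments are elementary and valid for arbitrary metric spaces; the paper's is shorter because pivoting on the existence of a $3$-point line collapses your partition cases into two. One spot in yours worth spelling out: when you assert that every $\approx$-class of size at least $2$ is confined to a single triangle, a three-edge class could a priori be a star $K_{1,3}$; this is excluded because your fact (ii) would then force the common line to equal two different three-point sets, but your ``consequently'' leaves that small step to the reader.
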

\begin{proof}
Consider an arbitrary critical $1$-$2$ metric space on $n$ points. If
each of its lines has precisely $2$ points or if one of its lines has
precisely $n$ points, then this space has the De Bruijn - Erd\H{o}s
property; otherwise one of its lines has precisely $3$ points and
$n=4$. Let $T$ denote the $3$-point line and let $w$ denote the fourth
point of the space. If there are distinct $x,y$ in $T$ such that
$\ov{wx}=\ov{wy}$, then $\ov{xy}$ is a universal line; else the three
lines $\ov{wx}$ with $x$ ranging through $T$ are pairwise distinct
$2$-point lines.
\end{proof}

\begin{center}
{\bf Acknowledgement}
\end{center}
\vspace{-0.3cm}
This research was undertaken, in part, thanks to funding from the
Canada Research Chairs program.

\end{document}